\documentclass[letterpaper,11pt]{amsart}

\usepackage[margin=1.2in]{geometry}
\usepackage{amsmath,amsthm,amssymb}
\usepackage{xspace,xcolor}
\usepackage[breaklinks,colorlinks,citecolor=teal,linkcolor=teal,urlcolor=teal,pagebackref,hyperindex]{hyperref}
\usepackage[alphabetic]{amsrefs}
\usepackage[all]{xy}
\usepackage{color}
\usepackage{tikz-cd}

\theoremstyle{plain}
\newtheorem{thm}{Theorem}[section]

\newtheorem{prop}[thm]{Proposition}

\theoremstyle{definition}
\newtheorem{defi}[thm]{Definition}

\newtheorem{eg}[thm]{Example}

\newtheorem{question}[thm]{Question}

\theoremstyle{remark}
\newtheorem{rmk}[thm]{Remark}

\def\cD{\mathcal{D}}

\def\cI{\mathcal{I}}

\def\cO{\mathcal{O}}

\def\fa{\mathfrak{a}}
\def\fb{\mathfrak{b}}

\def\fm{\mathfrak{m}}

\def\frm{\mathfrak{m}}

\def\.{\cdot}
\def\^{\widehat}

\def\({\left(}
\def\){\right)}

\renewcommand{\and}{ \ \ \text{ and } \ \ }

\newcommand{\factor}[2]{\left. \raise 2pt\hbox{$#1$} \right/\hskip -2pt\raise -2pt\hbox{$#2$}}

\DeclareMathOperator{\lct} {lct}

\makeatletter
\@namedef{subjclassname@2020}{\textup{2020} Mathematics Subject Classification}
\makeatother

\usepackage{soul}

\begin{document}

\author[M.~Musta\c{t}\u{a}]{Mircea Musta\c{t}\u{a}}

\address{Department of Mathematics, University of Michigan, 530 Church Street, Ann Arbor, MI 48109, USA}

\email{mmustata@umich.edu}

\thanks{The author was partially supported by NSF grant DMS-2301463 and by the Simons Collaboration grant \emph{Moduli of
Varieties}}

\subjclass[2020]{14B05, 14J17, 32S25}

\begin{abstract}
Given a hypersurface defined by $f$ in a smooth complex algebraic variety $X$, and a point $P$ on this hypersurface, we consider the invariant $\beta_P(f)$
given by the log canonical threshold at $P$ of $\fm_P\cdot J_f$, where $\fm_P$ is the ideal defining $P$ and $J_f$ is the Jacobian ideal of $f$.
We show that this invariant satisfies most of the formal properties of the log canonical threshold of $f$ and give some examples. Dano Kim \cite{DanoKim} asked whether
this invariant always gives an upper bound for the minimal exponent of $f$ at $P$. Motivated by this, we raise another question about minimal exponents, give a
positive answer to a weaker version, and discuss some examples. 
\end{abstract}

\title[On some invariants of hypersurface singularities]{On some invariants of hypersurface singularities}

\dedicatory{To Bernard Teissier, with admiration, on the occasion of his 80th birthday}

\baselineskip 16pt \footskip = 32pt

\maketitle

\section{Introduction}
Let $X$ be a smooth, irreducible, $n$-dimensional complex algebraic variety, and let $P\in X$. Suppose that 
$f\in\cO_X(X)$ is nonzero, defining the hypersurface $Z$, with $P\in Z$. An important invariant which measures the singularity of $Z$ at $P$ is the 
log canonical threshold ${\rm lct}_P(f)$. This plays an important role in birational geometry due to the fact that it is the largest $t>0$ such that the pair $(X,tZ)$
is log canonical. The log canonical threshold at $P$ can be defined, more generally, for nonzero ideals that vanish at $P$. 

A refinement of the log canonical threshold is provided by the minimal exponent $\widetilde{\alpha}_P(f)$, introduced in this generality by Saito 
\cite{Saito-B}, but which was extensively studied in the 80s by Varchenko, Steenbrink, Malgrange, Teissier, Loeser, etc. for hypersurfaces with isolated singularities,
as the \emph{Arnold exponent}. The minimal exponent $\widetilde{\alpha}_P(f)$ determines the log canonical threshold via the formula
$${\rm lct}_P(f)=\min\big\{\widetilde{\alpha}_P(f),1\big\}.$$
Therefore, the minimal exponent is interesting when ${\rm lct}_P(f)=1$, that is, when the pair $(X,Z)$ is log canonical. 
We note that $\widetilde{\alpha}_P(Z)=\infty$ if and only if $P$ is a smooth point of $Z$, and Saito showed in \emph{loc. cit.} that $\widetilde{\alpha}_P(f)>1$
if and only if $Z$ has rational singularities at $P$. 

In \cite{Teissier}, Bernard Teissier conjectured the following lower bound when $Z$ has an isolated singularity at $P$: 
$$\widetilde{\alpha}_P(f)\geq \tfrac{1}{\theta_P(f)+1}+\tfrac{1}{\theta_P(f\vert_{H_1})+1}+\ldots+\tfrac{1}{\theta_P(f\vert_{H_1\cap\ldots\cap H_{n-1}})},$$
where $H_1,\ldots,H_{n-1}$ are general smooth hypersurfaces in $X$ containing $P$. Here the $\theta$-invariant is defined as follows on $X$
(and similarly for each restriction): $\theta_P(f)$ is the minimum of the positive rational numbers $\tfrac{r}{s}$ with the property that 
${\mathfrak m}_P^r$ is contained in the integral closure $\overline{J_f^s}$ of $J_f^s$ in a neighborhood of $P$, where ${\mathfrak m}_P$ is the ideal defining $P$
and $J_f$ is the Jacobian ideal of $f$. This note is motivated by a result of Dano Kim \cite{DanoKim}, who proved the bound in Teissier's conjecture, not for
the minimal exponent, but for ${\rm lct}_P({\mathfrak m}_PJ_f)$. In the meantime, Teissier's conjecture was settled in \cite{DM}, by refining Loeser's approach in \cite{Loeser}.
However, this still leaves open the question concerning the comparison of the two inequalities: 

\begin{question}\label{question_Kim}(\cite{DanoKim})
With the above notation, if $P\in Z$ is a singular point, do we always have 
\begin{equation}\label{eq_Dano_Kim}
\widetilde{\alpha}_P(f)\leq {\rm lct}_P({\mathfrak m}_PJ_f)?
\end{equation}
\end{question}

We mention that there is a lower bound for the minimal exponent in terms of the Jacobian ideal, given by \cite[Theorem~1.2]{CKM}: we have $\widetilde{\alpha}_P(f)\geq {\rm lct}_P(f,J_f^2)$.

In Question~\ref{question_Kim}, we do not need to assume that $Z$ has an isolated singularity at $P$, but it is easy to see that it is enough to give a positive answer in this case
(see Remark~\ref{isolated_singularity} below). It was noticed in \cite{DanoKim} that since $f$, around $P$, lies in the integral closure of ${\mathfrak m}_PJ_f$,
we always have 
$${\rm lct}_P(f)\leq {\rm lct}_P({\mathfrak m}_PJ_f),$$
hence the above question clearly has a positive answer when $Z$ doesn't have a rational singularity at $P$. 
We make a few remarks and give a few examples when we have a positive answer to Question~\ref{question_Kim} in Section~\ref{section3}.

A positive answer to this question would be interesting since it would provide an upper bound for the minimal exponent in terms of 
the conceptually easier to handle right-hand side. While lower bounds for the minimal exponents are known using log resolutions (see \cite[Corollary~D]{MPV}),
it is not easy to provide upper bounds. The only available upper bound is a basic one involving the multiplicity at $P$: if ${\rm mult}_P(f)=d\geq 2$, then
$\widetilde{\alpha}_P(f)\leq\tfrac{n}{d}$ (see \cite[Theorem~E(3)]{MPV}), as well as a weighted version of this one (see \cite[Proposition~2.1]{CDM}).

Motivated by Question~\ref{question_Kim}, in Section~\ref{section2} we consider in more detail the invariant ${\rm lct}_P({\mathfrak m}_PJ_f)$ that, to simplify notation, we denote by $\beta_P(f)$.
As we will see, the general properties of log canonical thresholds of ideals imply that $\beta_P(f)$ behaves similarly to the log canonical threshold (or the minimal exponent). 

In a different direction, but still motivated by Question~\ref{question_Kim}, we pose the following

\begin{question}\label{question2}
If $f\in \fm_P\fa$, for some ideal $\fa\subseteq \fm_P$, does it follow that 
$$\widetilde{\alpha}_P(f)\leq {\rm lct}_P(\fm_P\fa)?$$
\end{question}

For a more optimistic question, whose positive answer would imply the assertion in Question~\ref{question_Kim}, see Question~\ref{question3} below.
We show in Theorem~\ref{prop_question2} that under the hypothesis in Question~\ref{question2}, we have the weaker
inequality $\widetilde{\alpha}_P(f)\leq {\rm lct}_P(\fa)$. We also discuss the case when $\fa$ is a monomial ideal in Example~\ref{example_monomial}. 

\subsection*{Acknowledgment}
It is a pleasure to dedicate this note to Bernard Teissier, whose work has had a profound influence on Singularity Theory and whose kind and encouraging personality has inspired generations of mathematicians to enter this field.

\section{The invariant $\beta_P(f)$}\label{section2}

Let $X$ be a smooth, irreducible, complex algebraic variety of dimension $n$. Suppose that $f\in\cO_X(X)$ and $P\in X$ are such that $f(P)=0$.
Let $Z$ be the hypersurface defined by $f$. We denote by $\fm_P$ the ideal defining $P$ in $X$.
For the definition and basic facts about log canonical thresholds of ideals in $X$, we refer to \cite[Chapter~9]{Lazarsfeld} or \cite{Mustata2}.

Recall that the Jacobian ideal $J_f$ is defined as follows. For every point $Q\in X$, we choose algebraic coordinates\footnote{This means that $dx_1,\ldots,dx_n$ trivialize
the cotangent sheaf.} $x_1,\ldots,x_n$ in some neighborhood $U$ of $Q$, and then $J_f$ is defined in $U$ by the ideal $\big(f,\tfrac{\partial f}{\partial x_1},\ldots,\tfrac{\partial f}{\partial x_n}\big)$. 
It is easy to see that this ideal only depends on $Z$, and not on the
choice of coordinates or on the function $f$ defining $Z$.

\begin{defi}
With the above notation, we put
$$\beta_P(Z)=\beta_P(f):={\rm lct}_P(\fm_PJ_f).$$
\end{defi}

\begin{rmk}
If $Z$ is any hypersurface in $X$, then we can still define $\beta_P(Z)$ for every $P\in Z$, by choosing a defining equation around $P$, and using the fact that
$J_f$ is independent of the choice of $f$.
\end{rmk}

\begin{rmk}\label{version_Jacobian_ideal}
Another version of Jacobian ideal is the ideal $J'_f$, defined around $P\in Z$ by $\big(\tfrac{\partial f}{\partial x_1},\ldots,\tfrac{\partial f}{\partial x_n}\big)$,
where $x_1,\ldots,x_n$ are algebraic coordinates in a neighborhood of $P$. This is again independent of the choice of coordinates, but it depends on the
choice of equation $f$ for $Z$. However, $J'_f$ and $J_f$ have the same integral closure in a neighborhood of $P$, hence the definition of $\beta_P(Z)$ does not
change if we use $J'_f$ instead of $J_f$ by \cite[Corollary~9.6.17]{Lazarsfeld}.
\end{rmk}

\begin{eg}
We have $\beta_P(Z)\leq n$, with equality if and only if $P$ is a smooth point of $Z$. This is a consequence of the well-known fact that if $P$ lies in the zero-locus of an ideal $\fa$,
then $\lct_P(\fa)\leq n$, with equality if and only if $\fa=\fm_P$ in a neighborhood of $P$. 
\end{eg}

\begin{rmk}\label{rmk_bound_by_integral_closure}
It is well-known that in a neighborhood of $P$, $f$ lies in the integral closure of $\fm_PJ_f$, hence \emph{loc. cit.} gives
$${\rm lct}_P(f)\leq\beta_P(f).$$
\end{rmk}

\begin{eg}
If $f=x_1^d\in {\mathbf C}[x_1,\ldots,x_n]$, for some $d\geq 2$, then $\fm_0J_f=(x_1,\ldots,x_n)x_1^{d-1}$. We see that a log resolution is given
by the blow-up at the origin and we have
$$\beta_0(f)=\min\big\{\tfrac{1}{d-1},\tfrac{n}{d}\big\}=\left\{
\begin{array}{cl}
\tfrac{1}{d}, & {\rm if}\,n=1; \\[2mm]
\tfrac{1}{d-1}, & {\rm if}\,n\geq 2.
\end{array}\right.$$
We note that the result depends on the ambient ring we consider. This behavior is one aspect in which $\beta_P(f)$ differs from ${\rm lct}_P(f)$,
but it is not surprising, due to the presence of the maximal ideal in the definition of $\beta_P$
\end{eg}

\begin{eg}
Suppose that $a$ and $b$ are positive integers, with $a\geq b\geq 1$, and let $f=x^ay^b\in {\mathbf C}[x,y]$.
In this case, we have $\fm_0J_f=x^{a-1}y^{b-1}(x,y)^2$, hence a log resolution of $\fm_0J_f$ is given by the blow-up at the origin, and we see
that 
$$\beta_0(f)=\min\big\{\tfrac{1}{a-1},\tfrac{2}{a+b}\big\},$$
hence $\beta_0(f)=\tfrac{1}{a-1}$ if $a\geq b+2$, and $\beta_0(f)=\tfrac{2}{a+b}$ if $a=b$ or $a=b+1$.
\end{eg}

\begin{eg}\label{eg_ordinary}
Suppose that $Z$ has an ordinary singular point at $P$, that is, the projective tangent cone of $Z$ at $P$ is smooth. If $\pi\colon Y\to X$ is the blow-up of $X$ at $P$,
with exceptional divisor $E$, then $J_f\cdot\cO_Y=\cO_Y\big(-(d-1)E\big)$ around $P$, where $d={\rm mult}_P(Z)$, and $\fm_P\cdot\cO_Y=\cO_Y(-E)$, hence $\pi$ is a log resolution of
$\fm_PJ_f$ in a neighborhood of $P$, and $\beta_P(Z)=\tfrac{n}{d}$. 
\end{eg}

\begin{eg}\label{det}
Let $A=(x_{ij})_{1\leq i,j\leq r}$ and let $f={\rm det}(A)\in {\mathbf C}[x_{ij}\mid 1\leq i,j\leq r]$. In this case, we have $J_f=I_{r-1}(A)$, the ideal generated
by the $(r-1)$-minors of $A$. It is a result that goes back to \cite{det} that we get a log resolution of $({\mathbf A}^{r^2}, J_f)$ by successively blowing-up
the (strict transforms of the) loci of matrices of rank $\leq i$, for $i=0,\ldots,r-2$. It is clear that this also gives a log resolution of $\fm_0J_f$, and 
an easy computation gives 
$\beta_0(f)=4$.
\end{eg}

In the case of an isolated singularity, we have the following lower bound in terms of the Milnor number. Recall that if $Z$ has an isolated singularity at $P$
(that is, in some neighborhood of $P$, the only possible singular point of $Z$ is $P$), then the Milnor number of $Z$ at $P$ is given by
$\mu_P(Z)=\ell(\cO_{X,P}/J'_f)$, where $J'_f$ is the version of the Jacobian ideal discussed in Remark~\ref{version_Jacobian_ideal}.

\begin{prop}\label{prop_Milnor_number}
If $Z$ is a hypersurface in a smooth, $n$-dimensional variety $X$, which has an isolated singularity at $P$,
then
$$\beta_P(Z)\geq\tfrac{n}{1+\mu_P(Z)^{1/n}}.$$
\end{prop}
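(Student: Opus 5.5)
We will use the standard bound relating the log canonical threshold of an $\fm_P$-primary ideal to its Samuel multiplicity. If $\fa\subseteq\cO_{X,P}$ is $\fm_P$-primary with multiplicity $e(\fa)$, then
$${\rm lct}_P(\fa)\geq \tfrac{n}{e(\fa)^{1/n}}$$
(de Fernex--Ein--Musta\c{t}\u{a}; it also follows from Howald-type arguments). We apply this to $\fa=\fm_PJ'_f$. By Remark~\ref{version_Jacobian_ideal}, $J'_f$ and $J_f$ have the same integral closure, so $\fm_PJ'_f$ and $\fm_PJ_f$ do as well, and the log canonical threshold is unchanged. Because $Z$ has an isolated singularity at $P$, after shrinking $X$ the ideal $J'_f$ is $\fm_P$-primary, or is the unit ideal if $P$ is a smooth point. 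In the smooth case the inequality is immediate, since then $\mu=0$ and $\beta_P=n$. So we may assume $\fm_PJ'_f$ is $\fm_P$-primary.

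The main step is to bound $e(\fm_PJ'_f)$. Teissier's mixed multiplicities give the Minkowski-type inequality
$$e(\fa\fb)^{1/n}\leq e(\fa)^{1/n}+e(\fb)^{1/n}$$
for $\fm_P$-primary ideals $\fa,\fb$ in the regular local ring $\cO_{X,P}$. With $\fa=\fm_P$ we have $e(\fm_P)=1$. The partial derivatives of $f$ form a system of parameters, because $J'_f$ is $\fm_P$-primary and has $n$ generators; since $\cO_{X,P}$ is Cohen--Macaulay, this gives
$$e(J'_f)=\ell(\cO_{X,P}/J'_f)=\mu_P(Z).$$
Hence $e(\fm_PJ'_f)^{1/n}\leq 1+\mu_P(Z)^{1/n}$.

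Combining these two facts,
$$\beta_P(Z)={\rm lct}_P(\fm_PJ'_f)\geq \tfrac{n}{e(\fm_PJ'_f)^{1/n}}\geq\tfrac{n}{1+\mu_P(Z)^{1/n}}.$$

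The only potentially delicate points are the citations: the lct--multiplicity inequality, and Teissier's Minkowski inequality for mixed multiplicities. The latter is available in this generality, for example via Rees--Sharp or in Lazarsfeld's book, Chapter 1.6, for $\fm$-primary ideals in regular local rings. If needed, one can avoid it. The mixed multiplicities $e(\fm^{[i]},J'^{[n-i]})$ are computed by general elements, and the sequence $i\mapsto e_i$ is log-concave. This gives $e_i\leq \mu^{(n-i)/n}$, and expanding $e(\fm J')=\sum_i\binom{n}{i}e_i$ yields the same bound.
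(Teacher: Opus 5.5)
Your proof is correct and follows the paper's argument: the de Fernex--Ein--Musta\c{t}\u{a} bound ${\rm lct}_P(\fm_PJ'_f)\geq n/e(\fm_PJ'_f)^{1/n}$, Teissier's Minkowski inequality with $e(\fm_P)=1$, and $e(J'_f)=\mu_P(Z)$ because the partial derivatives form a system of parameters in the Cohen--Macaulay ring $\cO_{X,P}$. Your treatment of the smooth-point case and the log-concavity derivation of the Minkowski bound are fine extras; the paper does not need them.
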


\begin{proof}
In what follows, we denote by $e(\fa)$ the Hilbert-Samuel multiplicity of a zero-dimensional ideal. 
Let $f$ be a local equation of $Z$ around $P$. By Remark~\ref{version_Jacobian_ideal}, we have
$$\beta_P(Z)={\rm lct}_P(\fm_PJ'_f),$$ hence by
\cite[Theorem~0.1]{dFEM}, we have
$$\beta_P(Z)\geq \tfrac{n}{e(\fm_PJ'_f)^{1/n}}.$$
On the other hand, it follows from Teissier's Minkowski-type inequality for multiplicities \cite{Teissier0} that
$$e(\fm_PJ'_f)^{1/n}\leq e(\fm)^{1/n}+e(J'_f)^{1/n}=1+e(J'_f)^{1/n}.$$
Finally, since $J'_f$ is generated by a regular sequence, it follows from \cite[Theorem~14.11]{Matsumura}
that $e(J'_f)=\mu_P(Z)$. By putting all these together, we get the inequality in the proposition.
\end{proof}

\begin{eg}
If $Z$ is the affine cone over a smooth projective hypersurface of degree $d$ in ${\mathbf P}^{n-1}$, then 
$\mu_0(Z)=(d-1)^n$, and it follows from Example~\ref{eg_ordinary} that in this case the inequality in
Proposition~\ref{prop_Milnor_number} is an equality.
\end{eg}

We next show that the invariant $\beta_P(Z)$ satisfies some of the general properties of ${\rm lct}_P(Z)$. As we will see,
the basic properties can be easily deduced from the general properties of log canonical thresholds of ideals.

\begin{prop}\label{semicont0}
If $Z$ is a hypersurface in the smooth variety $X$, then the set 
$$\big\{\beta_P(Z)\mid P\in Z\big\}$$
is finite, and for every $c\geq 0$, the set
$$\big\{P\in Z\mid\beta_P(Z)\geq c\big\}$$
is open in $Z$.
\end{prop}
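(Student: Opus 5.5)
The plan is to reduce both assertions to standard facts about log canonical thresholds of ideals computed on a fixed log resolution. Both assertions are local on $X$, and $Z$ is quasi-compact, so we may assume $X$ is affine.

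\emph{Reduction to finitely many strata.} Points where $Z$ is smooth have $\beta_P(Z)=n$ by the earlier example. Since $\fm_P$ varies with $P$, I do not work with a single ideal. Instead I use a log resolution $\pi\colon Y\to X$ of $J_f$ (which is nonzero as $f\neq 0$), with $J_f\cdot\cO_Y=\cO_Y(-F)$, where $F=\sum_i a_iE_i$ has simple normal crossings with $K_{Y/X}=\sum_i k_iE_i$. I then stratify $X$ into finitely many locally closed smooth strata $S$ such that over each $S$ the morphism $\pi$ is "equisingular." Concretely, by generic smoothness applied repeatedly to $\pi$ restricted to all intersections $E_I=\bigcap_{i\in I}E_i$, I arrange that for $P$ in a stratum $S$ the fibre data are locally constant.

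\emph{Key step and main obstacle.} The hard step is computing $\lct_P(\fm_PJ_f)$ uniformly for $P\in S$. I would first try a simpler route that avoids fibres altogether. Consider $X\times X$ with the diagonal ideal $\cI_\Delta$, and the ideal $\fb=\cI_\Delta\cdot p_1^{-1}J_f$. For $P\in X$, restricting $\fb$ to the fibre $p_2^{-1}(P)\cong X$ gives exactly $\fm_PJ_f$. I then use the general result for families of ideals: for a family of ideals on a smooth morphism $\mathcal X\to T$, the function $t\mapsto\lct_{x}(\fb|_{\mathcal X_t})$ takes finitely many values and is lower semicontinuous. This follows from a log resolution of $\fb$ together with generic smoothness over a stratification of $T$ (the resolution restricts to log resolutions of the fibres over a dense open, giving constancy there), combined with inversion of adjunction/restriction theorem $\lct_x(\fb|_{\mathcal X_t})\le\lct_x(\fb)$ and semicontinuity of lct (\cite[Chapter~9]{Lazarsfeld}, \cite{Mustata2}). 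Applied with $T=X$, the point $x=(P,P)$, and noetherian induction on $T$, this yields finiteness of the set of values of $\beta_P(Z)$.

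\emph{Openness.} For openness of $\{\beta_P\ge c\}$, I would prove that for each $P_0$ there is a neighbourhood $U$ such that $\beta_P(Z)\ge\beta_{P_0}(Z)$ for all $P\in U$. I would do this via the multiplier-ideal characterization: $\beta_P(Z)\ge c$ if and only if $\cI\big((\fm_PJ_f)^{c'}\big)$ is trivial at $P$ for all $c'<c$. Using the family version, where the multiplier ideal of $\fb^{c'}$ restricted to fibres contains the fibre multiplier ideals by the restriction theorem, with equality on a dense open subset, I would combine this with the finiteness of values. Since there are only finitely many values, it suffices to show $\{P\colon \lct_{(P,P)}\ge c\}$ is constructible and stable under generization. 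Constructibility comes from the stratification. Generization stability comes from the restriction theorem applied to a curve through $P_0$ together with lower semicontinuity of lct in families (\cite[Theorem~9.5.?]{Lazarsfeld}, semicontinuity of lct). Establishing this semicontinuity step for the diagonal family, rather than simply citing it, is where I expect the real work to lie.
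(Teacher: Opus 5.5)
Your proposal is correct and is essentially the paper's argument. The paper also uses the family $X\times Z\to Z$ (you use $X\times X$, which makes no difference) with the ideal $(J_f\cdot\cO_{X\times Z})\cdot(\cI_\Delta\cdot\cO_{X\times Z})$, notes that its restriction to the fibre over $P$ at the diagonal point $(P,P)$ is $\fm_PJ_f$, and then cites the standard finiteness and semicontinuity results for log canonical thresholds in families instead of sketching them as you do; your opening stratification via a log resolution of $J_f$ is unnecessary and can be dropped.
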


\begin{proof}
Let $\pi\colon X\times Z\to Z$ be the projection and let $s\colon Z\to X\times Z$ be the section of $\pi$ given by
$s(x)=(x,x)$. We consider $\fa=\cI_{\Delta}\cdot \cO_{X\times Z}$, where $\cI_{\Delta}\subseteq\cO_{X\times X}$
is the ideal defining the diagonal. It follows that if $\fb=(J_f\cdot\cO_{X\times Z})\cdot \fa$, then 
$$\beta_P(Z)={\rm lct}_{s(P)}(\fb\cdot\cO_{\pi^{-1}(P)}).$$
The assertion in the proposition then follows by well-known properties of the log canonical threshold in families
(see, for example, \cite[Lemma~4.8 and Theorem~4.9]{Mustata}).
\end{proof}

\begin{rmk}
In light of the above proposition, it makes sense to define a global version of the invariant $\beta_P(Z)$ by putting
$$\beta(Z):=\min\big\{\beta_P(Z)\mid P\in Z\big\}.$$
\end{rmk}

\begin{prop}\label{inversion_of_adjunction}
If $Z$ is a hypersurface in the smooth variety $X$, and $H$ is a smooth hypersurface in $X$ that is not contained in $Z$, 
and if $Z\vert_H=Z\cap H\hookrightarrow H$ is the corresponding hypersurface of $H$, then
$$\beta_P(Z)\geq\beta_P(Z\vert_H)\quad\text{for all}\quad P\in Z\vert_H.$$
\end{prop}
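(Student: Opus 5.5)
The plan is to deduce the inequality from the restriction (inversion of adjunction) property for log canonical thresholds of ideals. For a smooth hypersurface $H\subset X$, a point $P\in H$, and an ideal $\fb$ on $X$ not vanishing identically on $H$, this property reads
$$\lct_P(\fb)\geq \lct_P(\fb\cdot\cO_H)$$
(see \cite[Corollary~9.5.17]{Lazarsfeld} or \cite{Mustata2}). I will apply it with $\fb=\fm_PJ_f$, where $f$ is a local equation of $Z$ near $P$. Then the left-hand side is $\beta_P(Z)$. It will therefore be enough to compare $\fb\cdot\cO_H$ with $\fm_{H,P}J_{f\vert_H}$, where $\fm_{H,P}=\fm_P\cdot\cO_H$ is the ideal of $P$ in $H$.

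For the comparison, choose algebraic coordinates $x_1,\ldots,x_n$ near $P$ such that $H=(x_n=0)$; then $x_1,\ldots,x_{n-1}$ restrict to coordinates on $H$. Set $g=f\vert_H$. Since $H\not\subseteq Z$, $g$ is nonzero, so it defines the hypersurface $Z\vert_H$, and
$$J_g=\big(g,\tfrac{\partial g}{\partial x_1},\ldots,\tfrac{\partial g}{\partial x_{n-1}}\big),\qquad \tfrac{\partial g}{\partial x_i}=\tfrac{\partial f}{\partial x_i}\Big\vert_H .$$
This gives the inclusion $J_g\subseteq J_f\cdot\cO_H$, because $J_f\cdot\cO_H$ is generated by the restrictions of $f,\partial f/\partial x_1,\ldots,\partial f/\partial x_n$. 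Multiplying by $\fm_{H,P}$ yields
$$\fm_{H,P}J_g\subseteq (\fm_PJ_f)\cdot\cO_H .$$
Log canonical thresholds are monotone under inclusion of ideals, so $\lct_P\big((\fm_PJ_f)\cdot\cO_H\big)\geq\lct_P(\fm_{H,P}J_g)=\beta_P(Z\vert_H)$. Combined with the restriction inequality, this gives $\beta_P(Z)\geq\beta_P(Z\vert_H)$.

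The only step needing care is checking the hypotheses of the restriction theorem: it requires $\fb\cdot\cO_H\neq 0$ and $H$ smooth through $P$. Since $H\not\subseteq Z$, we have $g\neq 0$, so $J_g\neq 0$; hence $(\fm_PJ_f)\cdot\cO_H\neq 0$, and the theorem applies. If the reference states the restriction theorem only for $\lct$ near $P$ rather than at $P$, I would shrink $X$ to a small neighborhood of $P$, since all the invariants involved are local at $P$.
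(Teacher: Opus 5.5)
Your argument is correct and is essentially the paper's proof: both rest on the restriction theorem for multiplier ideals, monotonicity of $\lct$ under inclusion of ideals, and the inclusion $J_{f\vert_H}\subseteq J_f\cdot\cO_H$. The only difference is the order of the two steps. With $H=(x_1=0)$, the paper first passes on $X$ to the smaller ideal $\fa=\big(f,\tfrac{\partial f}{\partial x_2},\ldots,\tfrac{\partial f}{\partial x_n}\big)\subseteq J_f$, whose restriction to $H$ is exactly $J_{f\vert_H}$, and then applies restriction to $\fm_P\fa$; you instead apply restriction to $\fm_PJ_f$ directly and use monotonicity on $H$.
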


\begin{proof}
We may assume that $Z$ is defined by $f\in\cO_X(X)$, so $g=f\vert_H$ defines $Z_H$. Let $\fm_P$ and $\overline{\fm}_P$ denote
the ideals defining $P$ in $X$ and $H$, respectively.
By choosing a system of algebraic coordinates 
$x_1,\ldots,x_n$ in a neighborhood of $P$ 
such that $H$ is defined by $x_1$, we see that if 
$$\fa=\big(f,\tfrac{\partial f}{\partial x_2},\ldots,\tfrac{\partial f}{\partial x_n}\big),$$
then $\fa\cdot\cO_H=J_g$. Since $\fa\subseteq J_f$, we have
$${\rm lct}_P(\fm_P\fa)\leq {\rm lct}_P(\fm_PJ_f)=\beta_P(Z).$$
On the other hand, since $(\fm_P\fa)\cdot\cO_H=\overline{\fm}_PJ_g$,
it is a consequence of the Restriction Theorem for multiplier ideals (see, for example, \cite[Example~9.5.4]{Lazarsfeld})
that 
$${\rm lct}_P(\fm_P\fa)\geq {\rm lct}_P(\overline{\fm}_PJ_g)=\beta_P(Z_H).$$
By combining the two inequalities, we obtain the assertion in the proposition.
\end{proof}

The next result gives an extension of Proposition~\ref{semicont0} to the case of an arbitrary family of hypersurfaces.

\begin{prop}\label{semicont}
Let $\pi\colon {\mathcal X}\to T$ be a smooth morphism of complex algebraic varieties and suppose that $s\colon T\to {\mathcal X}$ is a morphism
such that $\pi\circ s={\rm Id}_T$. If ${\mathcal Z}\hookrightarrow {\mathcal X}$ is a relative Cartier divisor over $T$, containing $s(T)$, and if for every
(closed point) $t\in T$, we denote by ${\mathcal Z}_t$ the corresponding hypersurface in ${\mathcal X}_t=\pi^{-1}(t)$, then the following hold:
\begin{enumerate}
\item[i)] The set $\big\{\beta_{s(t)}({\mathcal Z}_t)\mid t\in T\big\}$ is finite.
\item[ii)] For every $\beta\in {\mathbf Q}_{\geq 0}$, the set
$$\big\{t\in T\mid \beta_{s(t)}({\mathcal Z}_t)\geq\beta\big\}$$
is open in $T$. 
\end{enumerate}
\end{prop}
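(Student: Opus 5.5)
The plan is to mimic the proof of Proposition~\ref{semicont0}: build one ideal on ${\mathcal X}$ whose restriction to each fiber ${\mathcal X}_t$ is $\fm_{s(t)}J_{f_t}$, at least up to integral closure near $s(t)$. We may work locally on $T$, since both assertions are local on $T$ and $T$ is quasi-compact, so finiteness reduces to finitely many opens. We may also assume that $T$ is affine, that ${\mathcal Z}$ is defined by $f\in\cO_{\mathcal X}({\mathcal X})$, and that $\pi$ factors as an étale map ${\mathcal X}\to{\mathbf A}^n\times T$. This gives relative algebraic coordinates $x_1,\ldots,x_n$, meaning $dx_1,\ldots,dx_n$ trivialize $\Omega_{{\mathcal X}/T}$. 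Because ${\mathcal Z}$ is a relative Cartier divisor, $f_t:=f\vert_{{\mathcal X}_t}$ is nonzero and defines ${\mathcal Z}_t$.

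First, define the relative Jacobian ideal ${\mathcal J}=\big(f,\tfrac{\partial f}{\partial x_1},\ldots,\tfrac{\partial f}{\partial x_n}\big)\subseteq\cO_{\mathcal X}$, using relative derivatives. Since relative derivatives commute with restriction to fibers, ${\mathcal J}\cdot\cO_{{\mathcal X}_t}=J_{f_t}$. Next, let $\cI_{s(T)}$ be the ideal of the section $s(T)$; this is a closed subscheme because $s$ is a section of a separated morphism. Since $\pi$ is smooth, $s(T)$ meets each fiber transversally in the reduced point $s(t)$, so $\cI_{s(T)}\cdot\cO_{{\mathcal X}_t}=\fm_{s(t)}$. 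I would verify this using the relative coordinates: after translating, the $x_i-x_i(s)$ generate $\cI_{s(T)}$ near $s(T)$. Setting $\fb={\mathcal J}\cdot\cI_{s(T)}$, we get $\fb\cdot\cO_{{\mathcal X}_t}=\fm_{s(t)}J_{f_t}$. Hence
$$\beta_{s(t)}({\mathcal Z}_t)={\rm lct}_{s(t)}\big(\fb\cdot\cO_{\pi^{-1}(t)}\big),$$
which is exactly the situation of Proposition~\ref{semicont0}, with the section $s$ playing the role of the diagonal.

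Then apply the results on log canonical thresholds in families for ideals on a smooth morphism with a section, namely \cite[Lemma~4.8 and Theorem~4.9]{Mustata}, to the pair $(\pi,\fb)$ along $s$. One point needs care: at points $t$ with $s(t)\notin$ the cosupport of $\fb$ restricted to the fiber. That cannot happen here, because $s(t)\in{\mathcal Z}_t$ and $\fm_{s(t)}$ already vanishes at $s(t)$. The cited results give two things. First, after stratifying $T$ into finitely many locally closed pieces over which a log resolution of $\fb$ restricts to fiberwise log resolutions, the values ${\rm lct}_{s(t)}$ are constant on each piece; this gives i). Second, they give lower semicontinuity of $t\mapsto{\rm lct}_{s(t)}(\fb\vert_{{\mathcal X}_t})$, which gives ii).

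The step I expect to be the main obstacle is ii), the openness. The stratification argument alone only gives constructibility. Openness requires the genuine semicontinuity statement, which ultimately rests on the restriction theorem (inversion of adjunction). It reduces to showing that for $t_0$ in the set and $t$ near $t_0$ the inequality persists. If the cited theorem is not stated in exactly the form we need, I would first reduce to $T$ a smooth curve, which is possible since a constructible set is open iff it is stable under generization along curves, after normalizing a curve through $t_0$. Then I would apply the Restriction Theorem \cite[Example~9.5.4]{Lazarsfeld} to the fiber ${\mathcal X}_{t_0}\subseteq{\mathcal X}$ together with generic smoothness of the log resolution over the generic point of the curve.
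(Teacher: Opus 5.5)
Your proposal is correct and follows the paper's proof. You form the product of the relative Jacobian ideal with the ideal of $s(T)$, observe that it restricts on each fiber to $\fm_{s(t)}J_{f_t}$, and then invoke the results on log canonical thresholds of ideals in families with a section. The paper first base changes to a resolution of singularities of $T$, so that ${\mathcal X}$ is smooth and has coordinates compatible with those on $T$, before citing those results. You instead use relative étale coordinates and sketch the semicontinuity argument yourself via reduction to smooth curves, which is only a cosmetic difference.
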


\begin{proof}
After taking a resolution of singularities $\widetilde{T}\to T$ and considering instead the morphism ${\mathcal X}\times_T\widetilde{T}\to \widetilde{T}$
and ${\mathcal Z}\times_T\widetilde{T}$, we see that we may assume that $T$ (and thus also ${\mathcal X}$) is smooth. Working locally on ${\mathcal X}$,
we may also assume that ${\mathcal Z}$ is defined by $f$, so ${\mathcal Z}_t$ is defined by $f_t=f\vert_{{\mathcal X}_t}$ for every $t\in T$. 
Furthermore, we may assume 
that we have algebraic coordinates $y_1,\ldots,y_r$ on $T$ and $x_1,\ldots,x_r,\ldots,x_{r+n}$ on ${\mathcal X}$
such that $x_i=y_i\circ\pi$ for $1\leq i\leq r$. Let $f_i=\tfrac{\partial f}{\partial x_{i+r}}$ for $1\leq i\leq n$ and let $\fa=\big(f,f_1,\ldots,f_n)$, so that for every $t\in T$,
we have $J_{f_t}=\fa\cdot\cO_{{\mathcal X}_t}$. If $\fb$ is the ideal defining $s(T)$ in ${\mathcal X}$, then
$$\beta_{s(t)}({\mathcal Z}_t)={\rm lct}_{s(t)}(\fa\fb\cdot\cO_{{\mathcal X}_t}).$$
The assertions in i) and ii) then follow from the behavior of log canonical thresholds of ideals in families (see, for example, \cite[Lemma~4.8 and Theorem~4.9]{Mustata}).
\end{proof}

\begin{prop}\label{m_adic}
If $X$ is a smooth, irreducible, $n$-dimensional algebraic variety, $f$, $g\in\cO_X(X)$ are nonzero, and $P\in X$ is a point such that 
$f(P)=g(P)=0$ and $f-g\in\fm_P^d$, for some positive integer $d$, then 
$$|\beta_P(f)-\beta_P(g)|\leq\tfrac{n}{d}.$$
\end{prop}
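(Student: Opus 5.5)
The plan is to reduce the statement to two standard properties of log canonical thresholds of ideals: monotonicity under inclusion, and the subadditivity
$$\lct_P(\fa+\fb)\leq\lct_P(\fa)+\lct_P(\fb)$$
for ideals $\fa,\fb$ vanishing at $P$. Together with $\lct_P(\fm_P^d)=\tfrac{n}{d}$, these give the bound at once. Since the hypothesis is symmetric in $f$ and $g$, it is enough to prove $\beta_P(f)\leq\beta_P(g)+\tfrac{n}{d}$.

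First I compare the ideals $\fm_PJ_f$ and $\fm_PJ_g$ near $P$. Choose algebraic coordinates $x_1,\ldots,x_n$ in a neighborhood of $P$ and write $h=f-g\in\fm_P^d$. Then $\tfrac{\partial h}{\partial x_i}\in\fm_P^{d-1}$ for every $i$, and $h\in\fm_P^d\subseteq\fm_P^{d-1}$. Every generator of $J_f$ therefore differs from the corresponding generator of $J_g$ by an element of $\fm_P^{d-1}$. This gives $J_f\subseteq J_g+\fm_P^{d-1}$, and multiplying by $\fm_P$,
$$\fm_PJ_f\subseteq\fm_PJ_g+\fm_P^{d}$$
in a neighborhood of $P$. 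The case $d=1$ needs no separate treatment, since then $\fm_P^0=\cO_X$ and the inclusion is trivial.

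Monotonicity of the log canonical threshold then gives
$$\beta_P(f)=\lct_P(\fm_PJ_f)\leq\lct_P(\fm_PJ_g+\fm_P^d)\leq\lct_P(\fm_PJ_g)+\lct_P(\fm_P^d)=\beta_P(g)+\tfrac{n}{d}.$$
The last equality uses $\lct_P(\fm_P^d)=\tfrac{n}{d}$, which follows because a single blow-up at $P$ gives a log resolution of $\fm_P^d$. Exchanging $f$ and $g$ gives the reverse inequality, and hence the proposition.

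The only nontrivial input is the subadditivity $\lct_P(\fa+\fb)\leq\lct_P(\fa)+\lct_P(\fb)$, which I expect to be the main point to justify. I would deduce it from the Summation Theorem for multiplier ideals (Mustaţă; see also \cite{Lazarsfeld}):
$$\cI\big((\fa+\fb)^c\big)=\sum_{\lambda+\mu=c}\cI(\fa^\lambda\fb^\mu).$$
Take $c=\lct_P(\fa)+\lct_P(\fb)$ and write $c=\lambda+\mu$. Then either $\lambda\geq\lct_P(\fa)$ or $\mu\geq\lct_P(\fb)$. In the first case $\cI(\fa^\lambda\fb^\mu)\subseteq\cI(\fa^\lambda)$, which is contained in $\fm_P$ near $P$; the second case is symmetric. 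Hence $\cI\big((\fa+\fb)^c\big)\subseteq\fm_P$, which says exactly that $\lct_P(\fa+\fb)\leq c$.
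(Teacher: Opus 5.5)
Your proof is correct and is essentially the paper's argument. The paper notes that $\fm_PJ_f+\fm_P^d=\fm_PJ_g+\fm_P^d$ and then quotes the $\fm_P$-adic continuity property of log canonical thresholds of ideals (Property~1.21 in Musta\c{t}\u{a}'s lecture notes); that property is exactly the monotonicity-plus-subadditivity argument with $\lct_P(\fm_P^d)=\tfrac{n}{d}$ that you spell out, and your derivation of subadditivity from the Summation Theorem is sound.
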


\begin{proof}
The hypothesis implies that $J_f+\fm_P^{d-1}=J_g+\fm_P^{d-1}$, hence
$\fm_PJ_f+\fm_P^d=\fm_PJ_g+\fm_P^d$. The assertion in the proposition then follows from the corresponding
property of log canonical thresholds of ideals, see for example \cite[Property~1.21]{Mustata2}.
\end{proof}

\section{Two questions regarding minimal exponents}\label{section3}

As in the previous section, suppose that $X$ is a smooth, irreducible, complex algebraic variety of dimension $n$. We consider a nonzero
$f\in\cO_X(X)$, defining a hypersurface $Z$, and $P\in Z$. In this case, one can associate to $f$ the \emph{Bernstein-Sato polynomial} $b_f(s)$,
which is the monic generator of the ideal consisting of those polynomials $b(s)\in {\mathbf C}[s]$ such that
$$b(s)f^s\in\cD_X[s]\cdot f^{s+1},$$
where $\cD_X$ is the ring of differential operators on $X$. Since $f$ is not invertible, we have $b_f(-1)=0$, and Saito defined the \emph{minimal exponent}
$\widetilde{\alpha}(Z)=\widetilde{\alpha}(f)$ in \cite{Saito-B} as the negative of the largest root of $b_f(s)/(s+1)$ (with the convention that this is $\infty$  if 
$b_f(s)=s+1$, which is known to be the case if and only if $Z$ is a smooth hypersurface). The local version $\widetilde{\alpha}_P(Z)$ is defined
by replacing $X$ by a small open neighborhood of $P$. By a result of Kashiwara \cite{Kashiwara}, it
 is known that, when finite, $\widetilde{\alpha}(Z)$ is a positive rational number (like all roots of $b_f(s)$). Moreover, we have
 $\min\big\{\widetilde{\alpha}_P(Z),1\big\}={\rm lct}_P(Z)$ by a result due to Lichtin and Koll\'{a}r (see \cite[Theorem~10.6]{Kollar}).
 Furthermore, it was shown by Saito in \cite{Saito-B} that $\widetilde{\alpha}(Z)>1$ if and only if $Z$ has rational singularities.
 
 If $Z$ is an arbitrary hypersurface in $X$ and $P\in Z$, then we define $\widetilde{\alpha}_P(Z)$ by choosing a local equation for $Z$ near $P$.
 In general, the minimal exponent satisfies similar properties to the log canonical thresholds. For statements and proofs of some of these results, we refer to \cite{MPV}.

As explained in the Introduction, our main motivation for considering the invariant $\beta_P(Z)$ comes from Question~\ref{question_Kim}, posed
by Dano Kim in \cite{DanoKim}. We begin with a few examples and some remarks in connection with this question.

\begin{eg}
If $Z$ has an ordinary singularity at $P$, of multiplicity $d\geq 2$, then $\beta_P(Z)=\tfrac{n}{d}=\widetilde{\alpha}_P(Z)$,
where the first equality is given in Example~\ref{eg_ordinary} and the second one appears, for example, in \cite[Theorem~E(3)]{MPV}.
Therefore, in this case, we have equality in (\ref{eq_Dano_Kim}).
\end{eg}

\begin{eg}
Suppose that $Z$ is the hypersurface in ${\mathbf A}^n$ defined by $f=x_1^{a_1}+\ldots+x_n^{a_n}$, where
$a_1\geq\ldots\geq a_n\geq 2$. In this case, it follows from the formula for the Bernstein-Sato polynomial of a weighted homogeneous polynomial
with isolated singularities (see \cite{BGM}) that $\widetilde{\alpha}_0(Z)=\sum_{i=1}^n\tfrac{1}{a_i}$. 
On the other hand, it follows from the definition that $\beta_0(Z)={\rm lct}_0(\fa)$, where
$$\fa=(x_1,\ldots,x_n)\cdot (x_1^{a_1-1},\ldots,x_n^{a_n-1}).$$
Since a monomial ideal admits a log resolution given by a toric morphism, in order to compute the log canonical threshold of a monomial ideal,
it is enough to consider monomial valuations over ${\mathbf A}^n$, hence a well-known toric computation gives
$$\beta_0(Z)=\min_{u_1,\ldots,u_n>0}\frac{u_1+\ldots+u_n}{\min_i\{u_i\}+\min_i\{(a_i-1)u_i\}}.$$
In order to check the inequality in (\ref{eq_Dano_Kim}), we thus need to show that for every $p$, $q>0$, and every $u_1,\ldots,u_n$ such that
$u_i\geq p$ and $(a_i-1)u_i\geq q$ for all $i$, we have
\begin{equation}\label{eq_eg_diagonal}
\sum_{i=1}^nu_i\geq (p+q)\cdot\sum_{i=1}^n\tfrac{1}{a_i}.
\end{equation}
However, for every $i$, we have 
$$a_iu_i=u_i+(a_i-1)u_i\geq p+q.$$
Dividing by $a_i$ and summing over all $i$, gives (\ref{eq_eg_diagonal}).
\end{eg}

\begin{eg}
If $Z$ is defined by the determinant of a generic $n\times n$ matrix as in Example~\ref{det}, then we have seen that $\beta_0(Z)=4$,
hence the inequality (\ref{eq_Dano_Kim}) holds in this case since $b_f(s)=\prod_{i=1}^n(s+i)$ implies $\widetilde{\alpha}_0(Z)=2$ (see 
\cite[Theorem~4.1]{Lorincz} for the formula for the Bernstein-Sato polynomial).
\end{eg}

\begin{rmk}
It follows from Remark~\ref{rmk_bound_by_integral_closure} (and it was pointed out already in \cite{DanoKim}) that the inequality 
(\ref{eq_Dano_Kim}) holds if $\widetilde{\alpha}_P(Z)={\rm lct}_P(Z)$. Hence in Question~\ref{question_Kim} we may assume that 
$\widetilde{\alpha}_P(Z)>1$, that is, $Z$ has rational singularities at $P$. 
\end{rmk}

\begin{rmk}\label{isolated_singularity}
In Question~\ref{question_Kim}, we may assume that $Z$ has an isolated singularity at $P$. Indeed, after possibly replacing $X$ by a suitable open neighborhood of $P$, we may assume that we have algebraic coordinates $x_1,\ldots,x_n$
 such that $x_i(P)=0$ for all $i$. For all $d\geq 2$, let $f_d=f+\sum_{i=1}^na_{d,i}x_i^d$, where $a_{d,1},\ldots,a_{d,n}\in {\mathbf C}$ are general. Since the zero locus of
 $(x_1^d,\ldots,x_n^d)$ is $\{P\}$, it follows from the Kleiman-Bertini theorem that the hypersurface defined by $f_d$ is smooth in $X\smallsetminus\{P\}$. By Proposition~\ref{m_adic},
 we have
 $$|\beta_P(f)-\beta_P(f_d)|\leq\tfrac{n}{d},$$
and a similar inequality holds for minimal exponents
$$|\widetilde{\alpha}_P(f)-\widetilde{\alpha}_P(f_d)|\leq\tfrac{n}{d},$$
by \cite[Proposition~6.6(2)]{MPV}. We conclude that if the inequality (\ref{eq_Dano_Kim}) holds for all $f_d$, then it also holds for $f$.
\end{rmk}

Since we know that at every $P\in Z$, $f$ lies in the integral closure of $\fm_PJ_f$, a positive answer to the following optimistic question would provide a positive answer
to Question~\ref{question_Kim}.

\begin{question}\label{question3}
If $X$ is a smooth variety, $\fa$ is an ideal on $X$ cosupported at $P\in X$, and $f\in\cO_X(X)$ is such that $f\in\overline{\fm_P\fa}$, then do we have 
$$\widetilde{\alpha}_P(f)\leq {\rm lct}_P(\overline{\frm_P\fa})\big(={\rm lct}_P(\fm_P\fa)\big)?$$
\end{question}

Of course, Question~\ref{question2} is a weaker version of Question~\ref{question3}, in which we replace the integral closure $\overline{\fm_P\fa}$
with $\fm_P\fa$.

\begin{rmk}
In general, for every ideal $J\subseteq\fm_P^2$, we can ask whether for every $f\in J$, we have $\widetilde{\alpha}_P(f)\leq {\rm lct}_P(J)$ (the condition
$J\subseteq\fm_P^2$ is required in order to make sure that $f$ has a singular point at $P$).
It is very easy to give examples of ideals $J$ for which this fails: for example, take $J=(f)$, where $f$ is such that $\widetilde{\alpha}_P(f)>1$.
In order to get examples with cosupport $P$, it is enough to take $J_d=(f)+\fm_P^d$, with $d\gg 0$, and use the fact that 
$\lim_{d\to\infty}{\rm lct}_P(J_d)={\rm lct}_P(f)$ by \cite[Property~1.21]{Mustata2}.
\end{rmk}

The next result provides a weaker inequality in the context of Question~\ref{question2}.

\begin{thm}\label{prop_question2}
Let $X$ be a smooth, irreducible variety and $P\in X$ a point defined by the ideal $\fm_P$. If $f\in\cO_X(X)$ is nonzero and $\fa\subseteq\fm_P$ is a coherent ideal on $X$ such that
$f\in\fm_P\fa$, then
$$\widetilde{\alpha}_P(f)\leq {\rm lct}_P(\fa).$$
\end{thm}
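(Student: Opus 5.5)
The plan is to realize $f$ as the restriction of an auxiliary function $F$ on a bigger smooth variety, whose minimal exponent is governed by ${\rm lct}_P(\fa)$. We would then use the fact that minimal exponents can only drop under restriction to smooth subvarieties.

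\textbf{Setting up $F$.} After shrinking $X$ to an affine neighborhood of $P$, choose generators $g_1,\ldots,g_r$ of $\fa$. The hypothesis $f\in\fm_P\fa$ lets us write $f=\sum_{i=1}^r h_ig_i$ with $h_i\in\fm_P$. On $X\times{\mathbf A}^r$, with coordinates $y_1,\ldots,y_r$ on ${\mathbf A}^r$, consider
$$F=\sum_{i=1}^r y_ig_i .$$
Let $\Gamma\subseteq X\times{\mathbf A}^r$ be the graph of $h=(h_1,\ldots,h_r)\colon X\to{\mathbf A}^r$. Then $\Gamma$ is a smooth closed subvariety of codimension $r$, isomorphic to $X$ via the first projection. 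Under this isomorphism $F\vert_\Gamma=f$. Since every $h_i$ lies in $\fm_P$, $\Gamma$ passes through the point $(P,0)$, which corresponds to $P$.

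\textbf{Restriction step.} By the restriction property of minimal exponents (\cite{MPV}), for a smooth hypersurface $H$ through a point $Q$ with $g\vert_H\neq0$ we have $\widetilde{\alpha}_Q(g\vert_H)\leq\widetilde{\alpha}_Q(g)$. Iterating this along a chain of smooth hypersurfaces cutting out $\Gamma$ locally (graph coordinates $y_i-h_i$), we get
$$\widetilde{\alpha}_P(f)\leq\widetilde{\alpha}_{(P,0)}(F).$$
The intermediate restrictions stay nonzero because the final one is $f\neq0$.

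\textbf{Comparing with ${\rm lct}_P(\fa)$.} It remains to show $\widetilde{\alpha}_{(P,0)}(F)\leq{\rm lct}_P(\fa)$. Here we would invoke the theory of Bernstein--Sato polynomials of ideals of Budur--Musta\c{t}\u{a}--Saito. The $b$-function $b_\fa(s)$ is defined via exactly this $F=\sum y_ig_i$, and locally at points of $V(\fa)\times\{0\}$ it is related to $b_F(s)$ in such a way that roots of $b_\fa$ (shifted appropriately) appear among the roots of $b_F(s)/(s+1)$. The largest root of $b_\fa$ is $-{\rm lct}(\fa)$, so $-{\rm lct}_P(\fa)$ is a root of the local $b$-function of $F/(s+1)$ at $(P,0)$. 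That gives $\widetilde{\alpha}_{(P,0)}(F)\leq{\rm lct}_P(\fa)$.

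The main obstacle is making this comparison precise and local at $(P,0)$: checking the normalization of the shift, and the treatment of the root $-1$ when ${\rm lct}_P(\fa)=1$. If that becomes delicate, the fallback is a direct argument. Since $F$ is linear in the $y_i$, a log resolution of $\fa$ gives a divisor $E$ computing ${\rm lct}_P(\fa)$. Pulling back to $X\times{\mathbf A}^r$ and blowing up further in the $y$-directions, one would use the weighted multiplicity bound of \cite[Proposition~2.1]{CDM} (weight $0$ on $y$, valuation $E$ on $X$) to bound $\widetilde{\alpha}_{(P,0)}(F)$ above by $A_E/{\rm ord}_E(\fa)={\rm lct}_P(\fa)$.
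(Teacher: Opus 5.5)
Your set-up and the restriction step $\widetilde{\alpha}_P(f)\leq\widetilde{\alpha}_{(P,0)}(F)$ are fine and match the paper. The paper writes $f=\sum g_ix_i$ with $g_i\in\fa$ and passes to $\fb=(g_1,\ldots,g_r)\subseteq\fa$; your coefficients $h_i\in\fm_P$ work just as well. The gap is the remaining inequality $\widetilde{\alpha}_{(P,0)}(F)\leq{\rm lct}_P(\fa)$, which you leave open, and you misdiagnose what makes it hard. The needed input is \cite[Corollary~1.2]{Mustata3}, which gives $b_\fa(s)=b_F(s)/(s+1)$ and hence ${\rm lct}(\fa)=\widetilde{\alpha}(F)$ exactly. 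So there is no shift to normalize, and the root $-1$ is harmless.

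The real problem is that this identity is \emph{global} on $X\times{\mathbf A}^r$. Shrinking $X$ to $U\ni P$ only yields $\min_{(Q,t)\in U\times{\mathbf A}^r}\widetilde{\alpha}_{(Q,t)}(F)={\rm lct}(\fa\vert_U)$. Since $U\times{\mathbf A}^r$ is not a small neighborhood of $(P,0)$, this minimum is a priori only $\leq\widetilde{\alpha}_{(P,0)}(F)$, which is the wrong direction. What is missing is the paper's homogeneity argument:
\begin{itemize}
\item The map $(x,y)\mapsto(x,cy)$ carries $F$ to $cF$, so $\widetilde{\alpha}_{(Q,ct)}(F)=\widetilde{\alpha}_{(Q,t)}(F)$ for $c\neq0$.
\item The sets $\{\widetilde{\alpha}\leq\eta\}$ are closed, so letting $c\to0$ gives $\widetilde{\alpha}_{(Q,0)}(F)\leq\widetilde{\alpha}_{(Q,t)}(F)$.
\item After shrinking $X$, one also has $\widetilde{\alpha}_{(P,0)}(F)\leq\widetilde{\alpha}_{(Q,0)}(F)$ for all $Q\in X$.
\end{itemize}
Hence $\widetilde{\alpha}_{(P,0)}(F)=\widetilde{\alpha}(F)={\rm lct}(\fa)\leq{\rm lct}_P(\fa)$.

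Your fallback does not close the gap either. \cite[Proposition~2.1]{CDM} is the weighted analogue of $\widetilde{\alpha}_P(f)\leq n/{\rm mult}_P(f)$. It concerns positive weights on local coordinates at the point, not an arbitrary divisor $E$ on a log resolution, whose center need not even be $P$. You also cannot pass to a model on which $E$ becomes a coordinate hyperplane, since minimal exponents are not invariant under blow-ups. A bound $\widetilde{\alpha}_x(F)\leq A_E/{\rm ord}_E(F)$ for all divisors $E$ whose center contains $x$ is simply false. A component of $\{F=0\}$ gives a ratio $\leq 1$, while $\widetilde{\alpha}(F)={\rm lct}(\fa)$ can exceed $1$ (e.g.\ $\fa=\fm_P$ with $\dim X\geq 2$). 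Even restricted to divisors with center $P$, such a bound is not known. It would immediately give a positive answer to Question~\ref{question3} for ideals cosupported at $P$, which the paper leaves open.
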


\begin{proof}
We may and will assume that $X$ is affine. Let $x_1,\ldots,x_r$ be generators of $\fm_P$ and let us write $f=\sum_{i=1}^rg_ix_i$, with $g_1\ldots,g_r\in\fa$. 
If $\fb=(g_1,\ldots,g_r)$, then $\fb\subseteq\fa$, hence it is enough to show that $\widetilde{\alpha}_P(f)\leq {\rm lct}_P(\fb)$. 

Let us consider now
$g=\sum_{i=1}^rg_iy_i\in\cO_Y(Y)$, where $Y=X\times {\mathbf A}^r$ and $y_1,\ldots,y_r$ are the coordinate functions on ${\mathbf A}^r$. 
It follows from \cite[Corollary~1.2]{Mustata3} that
\begin{equation}\label{eq_lct_min_exp}
{\rm lct}(\fb)=\widetilde{\alpha}(g).
\end{equation}

Note that for every $\eta$, the set $\big\{(Q,t)\in Y\mid \widetilde{\alpha}_{(Q,t)}(g)\leq\eta\big\}$
is closed in $Y$ (this follows, for example, from the fact that if $U\subseteq V$ are open subsets, then $b_{f\vert_U}(s)$ divides $b_{f\vert_V}(s)$).
First, this implies that 
after possibly replacing $X$ by an open neighborhood of $P$, we may assume that $\widetilde{\alpha}_{(P,0)}(g)\leq \widetilde{\alpha}_{(Q,0)}(g)$
for all $Q\in X$. Second, since $g$ is homogeneous (of degree 1) with respect to the $y$ variables, this implies that
$\widetilde{\alpha}_{(Q,0)}(g)\leq \widetilde{\alpha}_{(Q,t)}(g)$ for all $Q\in X$ and all $t\in {\mathbf A}^r$. By combining these,
we get
\begin{equation}\label{eq_special_pt}
\widetilde{\alpha}_{(P,0)}(g)=\widetilde{\alpha}(g).
\end{equation}

Let us consider the morphism $\varphi \colon X\to Y$ of varieties over $X$ such that $y_i\circ\varphi=x_i$.  In this case, the behavior of the minimal exponent
with respect to intersecting with a hypersurface gives 
\begin{equation}\label{eq_morphism}
\widetilde{\alpha}_P(f)\leq\widetilde{\alpha}_{(P,0)}(g).
\end{equation}
This is well-known, but we include the argument due to the lack of a reference. Note that $g\circ\varphi=f$. We can factor $\varphi$ as $p\circ i$,
where $i\colon X\hookrightarrow X\times Y$ is the graph of $\varphi$ and $p\colon X\times Y\to Y$ is the projection. We have 
$$\widetilde{\alpha}_{\varphi(P)}(g)=\widetilde{\alpha}_{(P,\varphi(P))}(h),$$
where $h=g\circ p$. On the other hand, applying iteratively \cite[Theorem~E(1)]{MPV}, gives
$$\widetilde{\alpha}_{(P,\varphi(P))}(h)\geq\widetilde{\alpha}_P(h\circ i),$$
which gives (\ref{eq_morphism}) since $h\circ i=f$ and $\varphi(P)=(P,0)$. 
By combining (\ref{eq_lct_min_exp}), (\ref{eq_special_pt}), and (\ref{eq_morphism}), we get
$$\widetilde{\alpha}_P(f)\leq {\rm lct}(\fa)\leq {\rm lct}_P(\fa),$$
completing the proof of the theorem.
\end{proof}

\begin{rmk}\label{rmk_general}
Suppose that $X$ is a smooth affine variety and $P\in X$ is a point defined by the ideal $\fm_P$. Let $J\subseteq\fm_P$ be an ideal generated by $f_1,\ldots,f_r$
(which we may and will assume to be linearly independent over ${\mathbf C}$).
For every $\lambda=(\lambda_1,\ldots,\lambda_r)\in {\mathbf C}^r$, let us put
$g_{\lambda}=\sum_{i=1}^r\lambda_if_i$.  It follows from the semicontinuity property of minimal exponents (see \cite[Theorem~E(2)]{MPV}) that for $\lambda$ general,
the minimal exponent $\widetilde{\alpha}_P(g)$ is constant (say, it is equal to $\alpha$), and $\widetilde{\alpha}_P(g_{\lambda})\leq\alpha$ for all $\lambda\neq 0$. 
Moreover, it follows from \cite[Proposition~3.1]{CKM} that
$\alpha\geq {\rm lct}_P(J)$. 

The following are equivalent:
\begin{enumerate}
\item[i)] $\widetilde{\alpha}_P(f)\leq{\rm lct}_P(J)$ for all nonzero $f\in J$.
\item[ii)] $\alpha={\rm lct}_P(J)$.
\end{enumerate}

The implication i)$\Rightarrow$ii) is clear, and the converse follows if we prove that, in general, for every nonzero $h\in J$, we have $\widetilde{\alpha}_P(h)\leq\alpha$.
In order to show this, as in the proof of Theorem~\ref{prop_question2}, we consider $g=\sum_{i=1}^rf_iy_i\in\cO_Y(Y)$,
where $Y=X\times {\mathbf A}^r$. Note first that by the behavior of minimal exponent when intersecting with general hypersurfaces (see 
\cite[Theorem~1.2ii)]{CDMO}), we have
\begin{equation}\label{eq1_last}
\widetilde{\alpha}_{(P,\lambda)}(g)=\widetilde{\alpha}_P(g_{\lambda})=\alpha\quad\text{for general}\quad\lambda\in {\mathbf A}^r.
\end{equation}

We next show that if $h_1,\ldots,h_r\in\cO_X(X)$ are such that $w=\big(h_1(P),\ldots,h_r(P)\big)$ is general, and $h=\sum_{i=1}^rh_if_i$,
then $\widetilde{\alpha}_P(h)\leq \alpha$. Indeed, consider the morphism $\varphi\colon X\to Y$ given by $\varphi=(1_X,h_1,\ldots,h_r)$. Arguing as in the proof of 
Theorem~\ref{prop_question2}, we see that since $h=g\circ\varphi$, we have
$$\widetilde{\alpha}_P(h)\leq \widetilde{\alpha}_{(P,w)}(g)=\alpha,$$
where the equality follows from (\ref{eq1_last}). 

Suppose now that $h_1,\ldots,h_r\in\cO_X(X)$ are arbitrary such that $h=\sum_{i=1}^rh_if_i$ is nonzero. The semicontinuity property of the minimal exponent
(see \cite[Theorem~E(2)]{MPV}) implies that if $h'=\sum_{i=1}^r(h_i+u_i)f_i$, where $u=(u_1,\ldots,u_r)\in {\mathbf A}^r$ is general, then
$$\widetilde{\alpha}_P(h)\leq \widetilde{\alpha}_P(h')\leq\alpha,$$
where the second inequality follows from the assertion proved in the previous paragraph.
\end{rmk}

\begin{eg}\label{example_monomial}
If $X={\mathbf A}^n={\rm Spec}\,{\mathbf C}[x_1,\ldots,x_n]$ and $\fa\subseteq (x_1,\ldots,x_n)^2$ is a monomial ideal, 
then for every $h\in \fa$, we have $\widetilde{\alpha}_0(h)\leq {\rm lct}_0(\fa)$ (in particular, Questions~\ref{question2} and \ref{question3} have a positive answer 
when $\fa$ is a monomial ideal). Indeed, note first that since 
$$\lim_{d\to\infty}\lct_0(\fa+\fm^d)={\rm lct}_0(\fa),$$ where $\fm=(x_1,\ldots,x_n)$ (see, for example,
\cite[Property~1.21]{Mustata2}), it is enough to prove the assertion when the zero-locus of $\fa$ is $\{0\}$. Furthermore, it follows from Remark~\ref{rmk_general}
that it is enough to show that if $h$ is a general linear combination of the monomial generators of $\fa$, then $\widetilde{\alpha}_0(h)={\rm lct}_0(\fa)$.
However, such $h$ is nondegenerate with respect to its Newton polyhedron and, since the zero-locus of $\fa$ is $\{0\}$, the singular locus of the hypersurface 
defined by $h$ consists of the origin, by the Kleiman-Bertini theorem (the fact that the origin is a singular point is due to the fact that $\fa\subseteq\fm^2$). In this case,
it is known that
\begin{equation}\label{eq_nondeg}
\widetilde{\alpha}_0(h)=1/\min\big\{t>0\mid (t,\ldots,t)\in P(\fa)\big\},
\end{equation}
where $P(\fa)$ is the Newton polyhedron of the ideal $\fa$
(see \cite{Varchenko}, \cite{EhlersLo}, or \cite{Saito-exponents}). The fact that the right-hand side of (\ref{eq_nondeg}) is equal to ${\rm lct}_0(\fa)$
is a consequence of Howald's formula (see \cite[Theorem~9.3.27]{Lazarsfeld}). 
\end{eg}

\end{document}